\newtheorem{theorem}{Theorem}
\newtheorem{corollary}[theorem]{Corollary}
\newtheorem{lemma}[theorem]{Lemma}
\newtheorem{proposition}[theorem]{Proposition}
\newenvironment{proof}[1][Proof]{\noindent\textbf{#1.} }{\ \rule{0.5em}{0.5em}}
\begin{document}

\begin{center}
\textbf{Certain Metric Properties of Level Hypersurfaces}

\medskip 

Pisheng Ding
\end{center}

\begin{quote}
\textsc{Abstract. }{\small This note establishes several integral identities
relating certain metric properties of level hypersurfaces of Morse functions.%
}
\end{quote}

\section{Introduction}

Let $f$ be a $C^{2}$ Morse function on an open connected subset $\Omega $ of 
$%
%TCIMACRO{\U{211d} }%
%BeginExpansion
\mathbb{R}
%EndExpansion
^{n+1}$ where $n\geq 2$. Suppose that $a$ and $b$ are values of $f$ such\
that $f^{-1}([a,b])$ is compact. For $t\in \lbrack a,b]$, let $\nu (t)$ be
the ($n$-dimensional) volume of the level-$t$ set $f^{-1}(t)$. Note that,
since $f$ is a Morse function, $\nu (t)$ is well-defined even if $t$ is a
critical value and that $\nu :[a,b]\rightarrow 
%TCIMACRO{\U{211d} }%
%BeginExpansion
\mathbb{R}
%EndExpansion
$ is continuous. At each regular point (i.e., noncritical point) on $%
f^{-1}(t)$, let $\mathbf{N=}-\nabla f/\left\vert \nabla f\right\vert $. This
choice of unit normal induces a Gauss map $G$ on the set of regular points
on $f^{-1}(t)$, with $G(p)=\mathbf{N}(p)\in S^{n}$. The mean curvature $H$
and the Gaussian curvature $K$ are defined on the set of regular points on $%
f^{-1}(t)$ by the standard definitions%
\begin{equation*}
H=\frac{1}{n}\limfunc{Tr}dG\quad \text{and}\quad K=\det dG\text{\thinspace .}
\end{equation*}%
We henceforth view $H$ and $K$ as functions on the set of regular points of $%
f^{-1}([a,b])$; i.e., $H(p)$ and $K(p)$ are the mean curvature and Gaussian
curvature of $f^{-1}(f(p))$ at $p$.

We now state our main results, in which $d\mu $ is the Lebesgue measure on $%
%TCIMACRO{\U{211d} }%
%BeginExpansion
\mathbb{R}
%EndExpansion
^{n+1}$ and $\partial _{i}$ denotes the $i$-th partial derivative.

\medskip

\noindent \textbf{Theorem}\quad \textit{Assume the preceding assumptions and
notation.}

\smallskip

\noindent (a)\quad $\nu (b)-\nu (a)=n\int_{f^{-1}([a,b])}H\,d\mu \,$.

\smallskip

\noindent (b)$\quad \int_{a}^{b}\nu (t)\,dt=\int_{f^{-1}([a,b])}\left\vert
\nabla f\right\vert d\mu $\thinspace .

\smallskip

\noindent (c)\quad $\int_{f^{-1}([a,b])}K\partial _{i}f\,d\mu =0$\textit{\
for each }$i\in \{1,\cdots ,n+1\}$.

\medskip

(Implicit in these results is the assertion that the functions $H$ and $%
K\partial _{i}f$ are integrable on $f^{-1}([a,b])$. This is a consequence of 
$f$ being a Morse function, as we shall demonstrate.)

\section{Two Preparatory Results}

In many results, we assume the following hypothesis.

\begin{quote}
\textbf{Hypothesis \dag }: $f$ is a $C^{2}$ Morse function on an open
connected subset $\Omega $ of $%
%TCIMACRO{\U{211d} }%
%BeginExpansion
\mathbb{R}
%EndExpansion
^{n+1}$ where $n+1\geq 3$; $a$ and $b$ are values of $f$ such\ that $%
f^{-1}([a,b])$ is compact.
\end{quote}

\begin{lemma}
\label{Lemma Key}Assume Hypothesis \dag . Suppose that $g$ is a function
that is continuous on the set of regular points in $f^{-1}([a,b])$ and
integrable on $f^{-1}([a,b])$. Then%
\begin{equation}
\int_{f^{-1}([a,b])}g\,d\mu =\int_{a}^{b}\left( \int_{f^{-1}(t)}\frac{g}{%
\left\vert \nabla f\right\vert }d\sigma \right) dt\,\text{,}  \notag
\end{equation}%
where $d\sigma $ is the ($n$-dimensional) volume form on $f^{-1}(t)$ and $%
\int_{f^{-1}(t)}\left( g/\left\vert \nabla f\right\vert \right) d\sigma $ is
only defined for $t$ a regular value.\footnote{%
The \textquotedblleft outer\textquotedblright\ integral $\int_{a}^{b}\cdots
dt$ on the right may first be interpreted as an improper Riemann integral.
Once the formula is proven, applying it to $\left\vert g\right\vert $ shows
that the one-variable function $\varphi (t):=\int_{f^{-1}(t)}\left(
g/\left\vert \nabla f\right\vert \right) d\sigma $ is absolutely integrable
over $[a,b]$, since $\left\vert \varphi (t)\right\vert \leq
h(t):=\int_{f^{-1}(t)}\left( \left\vert g\right\vert /\left\vert \nabla
f\right\vert \right) d\sigma $ and $\int_{a}^{b}h(t)dt=\int_{f^{-1}([a,b])}%
\left\vert g\right\vert d\mu $. Hence, $\int_{a}^{b}\varphi (t)dt$ may also
be interpreted as a Lebesgue integral.}
\end{lemma}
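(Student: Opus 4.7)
The plan is to establish the identity as a coarea formula by combining a classical change-of-variables on the regular region with a Morse-lemma analysis near the critical set. Since $f^{-1}([a,b])$ is compact and critical points of a Morse function are isolated, $f$ has only finitely many critical points in $f^{-1}([a,b])$, with a finite set of critical values $\{c_1,\ldots,c_K\}\subset [a,b]$.

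For each $\delta>0$, let $I_\delta=[a,b]\setminus\bigcup_k (c_k-\delta,c_k+\delta)$ and $A_\delta=f^{-1}(I_\delta)$. On $A_\delta$, $\nabla f$ is nowhere zero, so the implicit function theorem supplies a finite cover of $A_\delta$ by open sets in each of which $f$ is one of the local coordinates. A partition of unity subordinate to this cover reduces the identity on $A_\delta$ to chart-by-chart applications of Fubini's theorem, where the Jacobian of the coordinate change produces precisely the factor $1/|\nabla f|$, yielding
\begin{equation*}
\int_{A_\delta} g\, d\mu \;=\; \int_{I_\delta}\int_{f^{-1}(t)} \frac{g}{|\nabla f|}\, d\sigma\, dt.
\end{equation*}

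I would then let $\delta\to 0$. The left-hand side tends to $\int_{f^{-1}([a,b])} g\, d\mu$ by dominated convergence, since $|g|$ is integrable and $f^{-1}([a,b])\setminus A_\delta$ has Lebesgue measure shrinking to zero.

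The main obstacle is controlling the right-hand side near the critical values. By the Morse lemma, each critical point $p_j$ admits local coordinates $y$ in a ball $B_j$ about $p_j$ in which $f=f(p_j)+\tfrac{1}{2}Q(y)$ for a nondegenerate quadratic form $Q$; in these coordinates, $|\nabla f|$ is comparable to $|y|$ and the level sets are quadric surfaces. Outside $\bigcup_j B_j$, $|\nabla f|$ is bounded below, so the portion of the right-hand side over $(c_k-\delta,c_k+\delta)$ from that region is dominated by $\int_{f^{-1}((c_k-\delta,c_k+\delta))\setminus\bigcup_j B_j}|g|\, d\mu$ and vanishes with $\delta$. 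Inside each $B_j$, an explicit calculation on the quadric $\{Q=s\}$ in Morse coordinates yields a uniform-in-$s$ bound $\int_{\{Q=s\}\cap B_j}|y|^{-1}\, d\sigma\leq C_j$, the worst case being the limiting cone $\{Q=0\}$, on which the integral reduces to the finiteness of $\int_0^{\epsilon} r^{n-2}\, dr$. This is precisely why the hypothesis $n+1\geq 3$ is needed. After promoting the result from bounded $g$ to general integrable $g$ by truncation and monotone convergence, these estimates together with the integrability of $g$ yield the convergence of the right-hand side, and the identity follows. The footnote's absolute-integrability claim is then obtained by applying the identity to $|g|$.
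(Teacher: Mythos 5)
Your treatment of the regular part coincides in substance with the paper's: you prove the identity on preimages of compact intervals of regular values (your implicit-function-theorem charts plus a partition of unity are interchangeable with the paper's device, the normalized gradient-flow diffeomorphism $F:f^{-1}(a)\times[a,b]\to f^{-1}([a,b])$ followed by Fubini on patches), and you then exhaust the critical values and pass to the limit. The genuine divergence is in how the right-hand side is controlled as $\delta\to 0$, and here your route is much heavier than necessary. The paper never estimates level-set integrals near critical values at all: integrability of $g$ gives $\int_{A_\delta}g\,d\mu\to\int_{f^{-1}([a,b])}g\,d\mu$ by dominated convergence, and the outer integral is simply read as an improper Riemann integral whose convergence is inherited from the left-hand side; applying the identity to $|g|$ then upgrades it to a Lebesgue integral (this is exactly the footnote's logic). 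Even within your own framework the critical-point analysis is redundant: once the identity holds on every $A_\delta$, applying it to $|g|$ gives $\int_{I_\delta}|\varphi(t)|\,dt\le\int_{f^{-1}([a,b])}|g|\,d\mu$ for $\varphi(t)=\int_{f^{-1}(t)}(g/|\nabla f|)\,d\sigma$, monotone convergence yields $\varphi\in L^{1}([a,b])$, and dominated convergence in $t$ finishes the proof --- no uniform-in-$s$ bound on quadric integrals is needed, and no truncation of $g$ either.

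Two concrete cautions about the local analysis you propose. First, $f$ is only assumed $C^{2}$, and the classical Morse lemma does not deliver what you use at that regularity: the standard Palais/Moser proofs produce a $C^{k}$ chart from $C^{k+2}$ data, so for $C^{2}$ data the chart is a priori only a homeomorphism, which does not justify comparing $d\sigma$ and $|\nabla f|$ with their quadric-model counterparts (a $C^{1}$ Morse lemma for $C^{2}$ functions exists but is a genuinely hard theorem). Note that the paper deliberately avoids this: its Lemma 2 extracts the bound $|\nabla f|\ge Cr$ directly from the Taylor expansion, never invoking Morse coordinates. Second, your remark that the finiteness of $\int_{0}^{\epsilon}r^{n-2}\,dr$ is ``precisely why $n+1\ge 3$ is needed'' misattributes the dimension hypothesis to this lemma. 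The lemma, with the paper's proof, holds verbatim in all dimensions for integrable $g$; indeed for $n=1$ your uniform bound genuinely fails (near a saddle value in $\mathbb{R}^{2}$ one has $\int_{f^{-1}(t)}|\nabla f|^{-1}\,d\sigma\sim\log(1/|t-c|)$, unbounded but still integrable in $t$), which illustrates why the paper's argument --- requiring only integrability in $t$, automatic from integrability of $g$ --- is both simpler and more robust than uniform estimates on the fibers. So: your proof can be completed for $n\ge 2$ once the Morse-chart regularity issue is repaired or replaced by direct Taylor estimates, but the paper reaches the same conclusion with none of this machinery.
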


\begin{proof}
There are two cases, according as whether $[a,b]$ contains a critical value.

Case 1: $[a,b]$ is free of critical values. For each $p\in f^{-1}(a)$, let $%
t\mapsto F(p,t)$ be the integral curve for the field $\nabla f/\left\vert
\nabla f\right\vert ^{2}$.\ The map $F:f^{-1}(a)\times \lbrack
a,b]\rightarrow f^{-1}([a,b])$ is then a diffeomorphism, providing the
transformation of variables that results in the claimed formula. (In detail,
take a coordinate patch $U$ on $f^{-1}(a)$ and apply Fubini's theorem to $%
U\times \lbrack a,b]\overset{F|_{U\times \lbrack a,b]}}{\longrightarrow }%
F(U\times \lbrack a,b])$.)

Case 2: $[a,b]$ contains a critical value. Let $S$ be the (finite) set of
critical values in $[a,b]$. Then, $(a,b)\smallsetminus S$ is a disjoint
union of finitely many intervals $I_{j}:=(c_{j},c_{j+1})$ of regular values.
As $f^{-1}([a,b])=\cup _{j}f^{-1}(I_{j})\cup f^{-1}\left( S\cup
\{a,b\}\right) $ and $f^{-1}\left( S\cup \{a,b\}\right) $ has Lebesgue
measure zero (as a subset of $%
%TCIMACRO{\U{211d} }%
%BeginExpansion
\mathbb{R}
%EndExpansion
^{n+1}$),%
\begin{equation*}
\int_{f^{-1}([a,b])}g\,d\mu =\sum_{j}\int_{f^{-1}(I_{j})}g\,d\mu \,\text{.}
\end{equation*}%
Applying Case 1 to $f^{-1}([c_{j}+\epsilon ,c_{j+1}-\delta ])$ and letting $%
\epsilon ,\delta \rightarrow 0^{+}$, we have%
\begin{eqnarray*}
\int_{f^{-1}(I_{j})}g\,d\mu &=&\lim_{\epsilon ,\delta \rightarrow
0^{+}}\int_{f^{-1}([c_{j}+\epsilon ,c_{j+1}-\delta ])}g\,d\mu \\
&=&\lim_{\epsilon ,\delta \rightarrow 0^{+}}\int_{c_{j}+\epsilon
}^{c_{j+1}-\delta }\left( \int_{f^{-1}(t)}\frac{g}{\left\vert \nabla
f\right\vert }d\sigma \right) dt \\
&=&\int_{c_{j}}^{c_{j+1}}\left( \int_{f^{-1}(t)}\frac{g}{\left\vert \nabla
f\right\vert }d\sigma \right) dt\,\text{.}
\end{eqnarray*}%
Summing these integrals over $j$ proves the assertion.
\end{proof}

Recall from \S 1 the mean curvature $H$ and Gaussian curvature $K$, both
regarded as functions on the set of regular points of $f$. Explicit formulae
are known for $H$ and $K$. To state them, let $Q$ be the Hessian quadratic
form associated with $f$ and define the quadratic form $Q^{\ast }$ to be the
one whose standard matrix is the adjugate (or \textquotedblleft classical
adjoint\textquotedblright ) of the standard matrix for $Q$; we shall regard
the two quadratic forms $Q$ and $Q^{\ast }$ as real-valued functions of one
vector variable. Then,%
\begin{equation*}
H=\frac{\left\vert \nabla f\right\vert ^{2}\limfunc{Tr}Q-Q(\nabla f)}{%
n\left\vert \nabla f\right\vert ^{3}}\text{\quad and\quad }K=\frac{Q^{\ast
}(\nabla f)}{\left\vert \nabla f\right\vert ^{n+2}}\,\text{.}
\end{equation*}%
These are implicit in \cite[p.\thinspace 204]{Spivak} and made explicit in 
\cite{Goldman}. (In both of these references, $f^{-1}(t)$ is oriented by $%
\nabla f/\left\vert \nabla f\right\vert $, the opposite of our choice of $%
\mathbf{N}$.)

\begin{lemma}
For a $C^{2}$ Morse function $f$ on an open set $\Omega \subset 
%TCIMACRO{\U{211d} }%
%BeginExpansion
\mathbb{R}
%EndExpansion
^{n+1}$, the functions $H$, $K\partial _{i}f$, and $K\left\vert \nabla
f\right\vert $ are all integrable on any compact subset of $\Omega $.
\end{lemma}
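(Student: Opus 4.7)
The strategy is to exploit the explicit formulas displayed above for $H$ and $K$ together with the Morse nondegeneracy of the Hessian at critical points. Away from the critical set, the three integrands $H$, $K\partial_{i}f$, and $K|\nabla f|$ are continuous, hence bounded on any compact set disjoint from the critical points. Since $f$ is Morse, its critical points are isolated, so any compact $E \subset \Omega$ contains only finitely many of them, say $p_{1},\dots,p_{m}$. It therefore suffices to establish integrability on a small closed ball $\overline{B_{r}(p_{j})} \subset \Omega$ around each $p_{j}$.

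For a local pointwise bound, I would shrink $r$ so that on $\overline{B_{r}(p_{j})}$ the Hessian $Q$, and hence its adjugate $Q^{\ast}$ (whose entries are $n\times n$ minors of $Q$), is uniformly bounded. Viewing $Q$ and $Q^{\ast}$ as quadratic forms, one has $|Q(\mathbf{v})|,\,|Q^{\ast}(\mathbf{v})| \leq C|\mathbf{v}|^{2}$. Substituting $\mathbf{v} = \nabla f$ in the displayed formulas for $H$ and $K$ gives
\[
|H| \leq \frac{C}{|\nabla f|}, \qquad |K| \leq \frac{C}{|\nabla f|^{\,n}},
\]
so that $|K\partial_{i}f|$ and $|K|\nabla f||$ are each dominated by $C'/|\nabla f|^{\,n-1}$.

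The next step is to invoke nondegeneracy to convert these into integrable dominants. Since $f\in C^{2}$, the map $\nabla f$ is $C^{1}$ with $\nabla f(p_{j})=0$ and invertible Jacobian $Q(p_{j})$; a first-order Taylor expansion gives $\nabla f(p)=Q(p_{j})(p-p_{j})+o(|p-p_{j}|)$, and combined with the lower bound $|Q(p_{j})\mathbf{v}| \geq \lambda|\mathbf{v}|$, where $\lambda>0$ is the smallest singular value of $Q(p_{j})$, one obtains (after further shrinking $r$) a linear lower bound $|\nabla f(p)| \geq c|p-p_{j}|$ throughout $B_{r}(p_{j})$. Combined with the preceding estimates, the three integrands are dominated on $B_{r}(p_{j})$ by $C''/|p-p_{j}|$ and $C''/|p-p_{j}|^{\,n-1}$ respectively.

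It remains to note that $|p-p_{j}|^{-k}$ is integrable on $B_{r}(p_{j})\subset\mathbb{R}^{n+1}$ whenever $k<n+1$, since in polar coordinates about $p_{j}$ the radial integral is $\int_{0}^{r}\rho^{\,n-k}\,d\rho$. The standing hypothesis $n+1\geq 3$ (equivalently $n\geq 2$) ensures that both exponents $k=1$ and $k=n-1$ satisfy $k<n+1$, so both dominants are integrable; summing the local contributions with the (trivially bounded) contribution from $E\setminus\bigcup_{j}B_{r}(p_{j})$ finishes the proof. The only nontrivial step is the linear lower bound $|\nabla f(p)|\geq c|p-p_{j}|$, and this is precisely where Morse nondegeneracy is indispensable—if $Q(p_{j})$ were merely allowed to be singular, no such lower bound would hold and the argument would collapse.
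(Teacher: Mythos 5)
Your proposal is correct and follows essentially the same route as the paper's proof: reduce to small balls around the finitely many isolated critical points, bound the numerators of the explicit formulas for $H$ and $K$ using boundedness of the Hessian and its adjugate, derive the linear lower bound $\left\vert \nabla f(p)\right\vert \geq c\left\vert p-p_{j}\right\vert$ from Morse nondegeneracy, and conclude via the integrability of $\left\vert p-p_{j}\right\vert ^{-k}$ on balls in $\mathbb{R}^{n+1}$ for $k<n+1$. The only cosmetic difference is that you obtain the gradient lower bound from the first-order Taylor expansion of the $C^{1}$ map $\nabla f$ and the smallest singular value of the Hessian, whereas the paper gets the identical estimate from the homogeneity of $\nabla P$ and compactness of $S^{n}$.
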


\begin{proof}
It suffices to show that they are integrable \textquotedblleft
near\textquotedblright\ each critical point $p$, i.e., on a closed ball $D$
centered at $p$ in which $p$ is the only critical point. Without loss of
generality, assume that $p$ is the origin $\mathbf{0}\in 
%TCIMACRO{\U{211d} }%
%BeginExpansion
\mathbb{R}
%EndExpansion
^{n+1}$. We notate a typical point in $%
%TCIMACRO{\U{211d} }%
%BeginExpansion
\mathbb{R}
%EndExpansion
^{n+1}$ by writing its position vector $\mathbf{r}$ and we let $r=\left\Vert 
\mathbf{r}\right\Vert $. Then, for $\mathbf{r}$ near $\mathbf{0}$,%
\begin{equation*}
f(\mathbf{r})=f(\mathbf{0})+P(\mathbf{r})+o(r^{2})
\end{equation*}%
where $P(\mathbf{r})$ is the quadratic polynomial $\frac{1}{2}Q(\mathbf{r})$%
. For each $i\in \{1,\cdots ,n+1\}$,%
\begin{equation*}
\partial _{i}f=\partial _{i}P+r\epsilon _{i}\,\text{,}
\end{equation*}%
where $\epsilon _{i}\rightarrow 0$ as $r\rightarrow 0$, and for $\mathbf{r}%
\in D^{\prime }:=D\smallsetminus \{\mathbf{0}\}$,%
\begin{equation*}
\partial _{i}P(\mathbf{r})=r\alpha _{i}(\mathbf{r}/r)
\end{equation*}%
where $\alpha _{i}$ is a function on $S^{n}$. Hence, on $D^{\prime }$,%
\begin{equation*}
\left\vert \nabla f(\mathbf{r})\right\vert ^{2}=r^{2}\sum_{i=1}^{n+1}\left(
\alpha _{i}(\mathbf{r}/r)+\epsilon _{i}\right) ^{2}\,\text{.}
\end{equation*}%
As $f$ is a Morse function, $\mathbf{0}$ is the only critical point of $P$
and thus $\sum_{i}\alpha _{i}(\mathbf{r})^{2}>0$ for $\mathbf{r}\in S^{n}$.
Letting%
\begin{equation*}
m=\min_{\mathbf{r}\in S^{n}}\sum_{i=1}^{n+1}\alpha _{i}(\mathbf{r})^{2}\text{%
\thinspace ,}
\end{equation*}%
we have, for sufficiently small $r$, $\frac{1}{2}mr^{2}\leq \left\vert
\nabla f(\mathbf{r})\right\vert ^{2}\leq 2mr^{2}$. Hence, there are positive
numbers $C$,$\,M_{1}$,$\,M_{2}$,\thinspace $\delta $ such that, whenever $%
r\leq \delta $,%
\begin{equation*}
\left\vert \nabla f(\mathbf{r})\right\vert \geq Cr
\end{equation*}%
as well as%
\begin{equation*}
\left\vert \left\vert \nabla f\right\vert ^{2}\limfunc{Tr}Q-Q(\nabla
f)\right\vert (\mathbf{r})\leq M_{1}r^{2}\text{\quad and\quad }\left\vert
Q^{\ast }(\nabla f)\right\vert (\mathbf{r})\leq M_{2}r^{2}\text{\thinspace .}
\end{equation*}%
Therefore, for $r\leq \delta $,%
\begin{equation*}
\left\vert H(\mathbf{r})\right\vert =\frac{\left\vert \left\vert \nabla
f\right\vert ^{2}\limfunc{Tr}Q-Q(\nabla f)\right\vert (\mathbf{r})}{%
n\left\vert \nabla f(\mathbf{r})\right\vert ^{3}}\leq \frac{M_{1}}{nC^{3}}%
\frac{1}{r}
\end{equation*}%
and%
\begin{equation*}
\left\vert K(\mathbf{r})\partial _{i}f(\mathbf{r})\right\vert \leq
\left\vert K(\mathbf{r})\nabla f(\mathbf{r})\right\vert =\frac{\left\vert
Q^{\ast }(\nabla f)\right\vert (\mathbf{r})}{\left\vert \nabla f\right\vert
^{n+1}}\leq \frac{M_{2}}{C^{n+1}}\frac{1}{r^{n-1}}\text{\thinspace .}
\end{equation*}%
It is a standard fact that, for any $c>0$, $1/r^{n+1-c}$ is integrable on
any origin-centered ball in $%
%TCIMACRO{\U{211d} }%
%BeginExpansion
\mathbb{R}
%EndExpansion
^{n+1}$. Hence, $H$, $K\partial _{i}f$, and $K\left\vert \nabla f\right\vert 
$ are all integrable on $D$.
\end{proof}

\section{Main Results}

We establish the main results of the article.

\begin{theorem}
\label{Theorem Mean Curvature}Under Hypothesis \dag , $\nu (b)-\nu
(a)=n\int_{f^{-1}([a,b])}H\,d\mu $.
\end{theorem}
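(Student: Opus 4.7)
The plan is to recognize $nH$ as the Euclidean divergence of the vector field $V:=\nabla f/\left\vert \nabla f\right\vert $ and to deduce the identity from the divergence theorem applied to $U:=f^{-1}([a,b])$. A direct computation---using $\nabla \left\vert \nabla f\right\vert =Q(\nabla f)/\left\vert \nabla f\right\vert $ (from differentiating $\left\vert \nabla f\right\vert ^{2}=\sum (\partial _{i}f)^{2}$) together with $\operatorname{Tr}Q=\Delta f$---shows that at every regular point
\begin{equation*}
\operatorname{div}V=\frac{\Delta f}{\left\vert \nabla f\right\vert }-\frac{Q(\nabla f)}{\left\vert \nabla f\right\vert ^{3}}=\frac{\left\vert \nabla f\right\vert ^{2}\operatorname{Tr}Q-Q(\nabla f)}{\left\vert \nabla f\right\vert ^{3}}=nH\,.
\end{equation*}
On $\partial U$, $V$ equals the outward unit normal along $f^{-1}(b)$ and its negative along $f^{-1}(a)$, so a formal application of the divergence theorem gives $n\int _{U}H\,d\mu =\nu (b)-\nu (a)$ at once.

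The actual difficulty is that $V$ is undefined at the critical points of $f$. To handle this, I would excise small open balls $B_{\epsilon }(p_{j})$ around each critical point $p_{j}\in f^{-1}([a,b])$---finitely many, by compactness and non-degeneracy---and apply the divergence theorem to $U_{\epsilon }:=U\smallsetminus \bigcup _{j}\overline{B_{\epsilon }(p_{j})}$, on which $V$ is $C^{1}$. Three limits as $\epsilon \rightarrow 0^{+}$ then need to be verified: (i) $\int _{U_{\epsilon }}nH\,d\mu \rightarrow n\int _{U}H\,d\mu $, which is dominated convergence using the integrability of $H$ supplied by Lemma 2; (ii) the fluxes over the surviving portions of $f^{-1}(a)$ and $f^{-1}(b)$ in $\partial U_{\epsilon }$ converge to $-\nu (a)$ and $+\nu (b)$ respectively, the removed pieces being of $n$-volume $O(\epsilon ^{n})$; and (iii) the flux across each $\partial B_{\epsilon }(p_{j})$ vanishes.

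Part (iii) is the only nontrivial estimate and is exactly where the standing hypothesis $n\geq 2$ is used: since $\left\vert V\right\vert \equiv 1$, the flux over $\partial B_{\epsilon }(p_{j})$ is bounded in absolute value by $\operatorname{area}(\partial B_{\epsilon }(p_{j}))=O(\epsilon ^{n})\rightarrow 0$. Parts (i) and (ii) are routine once (iii) is in hand: (i) is immediate from Lemma 2 and dominated convergence, and (ii) reduces to the observation that the regular part of each level set differs from the whole level set by a Hausdorff-negligible piece, so that $\nu (a)$ and $\nu (b)$ are unaffected. The main obstacle, then, is purely the lack of smoothness of $V$ at critical points, and it is completely controlled by the crude bound $\left\vert V\right\vert \leq 1$ combined with the smallness of $\operatorname{area}(\partial B_{\epsilon })$ in dimension $n+1\geq 3$.
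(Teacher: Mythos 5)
Your core engine is the same as the paper's: the identity $nH=\operatorname{div}\left( \nabla f/\left\vert \nabla f\right\vert \right) $ plus the divergence theorem, with Lemma 2 supplying integrability of $H$. But your treatment of critical points is genuinely different. The paper never touches the geometry near a critical point: it excises thin slabs of \emph{values}, writing $(a,b)\smallsetminus S$ as finitely many intervals $(c_{j},c_{j+1})$ of regular values, applying the regular case to $f^{-1}([c_{j}+\epsilon ,c_{j+1}-\epsilon ])$, and passing to the limit using integrability of $H$ together with the continuity of $\nu $ asserted in \S 1; the sum then telescopes. You instead excise balls around the critical \emph{points} and estimate the new boundary flux by $\left\vert V\right\vert \leq 1$ times $\operatorname{area}(\partial B_{\epsilon })=O(\epsilon ^{n})$. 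When $a$ and $b$ are regular values your route is clean and arguably buys something the paper's does not: it makes no use of continuity of $\nu $ at interior critical values (a fact the paper invokes but never proves), since for small $\epsilon $ the spheres are disjoint from $f^{-1}(a)\cup f^{-1}(b)$ and the boundary fluxes there are exactly $-\nu (a)$ and $\nu (b)$ with no limit needed. (One small correction: the sphere-flux bound $O(\epsilon ^{n})\rightarrow 0$ works for every $n\geq 1$, so step (iii) is not where the hypothesis $n+1\geq 3$ enters.)

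The genuine gap is the case, explicitly allowed by Hypothesis \dag\ and emphasized in the paper (``$\nu (t)$ is well-defined even if $t$ is a critical value''), in which $a$ or $b$ is itself a critical value, so some $p_{j}$ lies on a boundary level set. Then two things you assert are unproven. First, $U_{\epsilon }$ is no longer a manifold with boundary: $\partial B_{\epsilon }(p_{j})$ meets $f^{-1}(a)$ along a corner, and applying the divergence theorem requires at least a transversality argument (available for a.e.\ $\epsilon $ by Sard, but it must be said). Second, your claim that the removed piece $f^{-1}(a)\cap B_{\epsilon }(p_{j})$ has $n$-volume $O(\epsilon ^{n})$ is precisely the quantitative heart of the matter: for a level set \emph{through} a nondegenerate critical point of a merely $C^{2}$ function this is true but not obvious (the $C^{2}$ Morse lemma gives only low-regularity normal forms), and it is essentially equivalent to the finiteness/continuity of $\nu $ at critical levels --- the very fact the paper quotes as background. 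Calling it ``Hausdorff-negligible'' names the conclusion rather than proves it. The easiest patch is to graft on the paper's limiting trick at the endpoints only: prove your statement for regular values $a^{\prime },b^{\prime }$ with $a<a^{\prime }<b^{\prime }<b$, then let $a^{\prime }\rightarrow a^{+}$, $b^{\prime }\rightarrow b^{-}$, using integrability of $H$ and continuity of $\nu $.
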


\begin{proof}
First recall (from \cite[p.\thinspace 142]{do Carmo-2}) that $H=-\frac{1}{n}%
\func{div}\mathbf{N}$. With $\mathbf{N:=-}\nabla f/\left\vert \nabla
f\right\vert $,%
\begin{equation*}
H=\frac{1}{n}\func{div}\frac{\nabla f}{\left\vert \nabla f\right\vert }\text{%
\thinspace .}
\end{equation*}

In the following, let $R=f^{-1}([a,b])$. There are two cases according as
whether $[a,b]$ contains a critical value.

Case 1: $[a,b]$ is free of critical values. Then, $R$ is an $(n+1)$-manifold
with boundary $f^{-1}(a)\cup f^{-1}(b)$. Let $\mathbf{n}$ denote the unit
outward normal (relative to $R$) on $\partial R$; then $\mathbf{n}=-\nabla
f/\left\vert \nabla f\right\vert $ on $f^{-1}(a)$ and $\mathbf{n}=\nabla
f/\left\vert \nabla f\right\vert $ on $f^{-1}(b)$. Now,%
\begin{equation*}
\nu (b)-\nu (a)=\int_{\partial R}\left\langle \frac{\nabla f}{\left\vert
\nabla f\right\vert },\mathbf{n}\right\rangle d\sigma =\int_{R}\func{div}%
\frac{\nabla f}{\left\vert \nabla f\right\vert }d\mu =\int_{R}nH\,d\mu \,%
\text{.}
\end{equation*}

Case 2: $[a,b]$ contains a critical value. Let $S$ be the (finite)\ set of
critical values in $[a,b]$. Then, $(a,b)\smallsetminus S$ is a disjoint
union of finitely many intervals $I_{j}=(c_{j},c_{j+1})$ of regular values.
As $R=\cup _{j}f^{-1}(I_{j})\cup f^{-1}\left( S\cup \{a,b\}\right) $ and $%
f^{-1}\left( S\cup \{a,b\}\right) $ has Lebesgue measure zero,%
\begin{equation*}
\int_{R}H\,d\mu =\sum_{j}\int_{f^{-1}(I_{j})}H\,d\mu \,\text{.}
\end{equation*}%
It remains to note that, for each $j$,%
\begin{eqnarray*}
\int_{f^{-1}(I_{j})}H\,d\mu &=&\lim_{\epsilon \rightarrow
0^{+}}\int_{f^{-1}([c_{j}+\epsilon ,c_{j+1}-\epsilon ])}H\,d\mu \quad \text{%
(by integrability of }H\text{)} \\
&=&\lim_{\epsilon \rightarrow 0^{+}}\frac{1}{n}\left( \nu (c_{j+1}-\epsilon
)-\nu (c_{j}+\epsilon )\right) \quad \text{(by Case 1)} \\
&=&\frac{1}{n}\left( \nu (c_{j+1})-\nu (c_{j})\right) \,\quad \text{(by
continuity of }\nu \text{).}
\end{eqnarray*}
\end{proof}

With the aid of Lemma \ref{Lemma Key}, Theorem \ref{Theorem Mean Curvature}
easily yields a formula for $\nu ^{\prime }$, which would take considerable
effort to obtain otherwise.

\begin{corollary}
\label{Corollary v'}Assume Hypothesis \dag . For any regular value $t_{0}\in
\lbrack a,b]$,%
\begin{equation*}
\nu ^{\prime }(t_{0})=n\int_{f^{-1}(t_{0})}\frac{H}{\left\vert \nabla
f\right\vert }d\sigma \,\text{.}
\end{equation*}
\end{corollary}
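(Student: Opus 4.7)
The plan is to combine Theorem \ref{Theorem Mean Curvature}, applied with a variable upper limit, with Lemma \ref{Lemma Key}, so as to express $\nu$ as the indefinite integral of an explicit one-variable function and then differentiate at the regular value $t_{0}$ via the fundamental theorem of calculus.

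Concretely, for each $t\in[a,b]$ I will apply Theorem \ref{Theorem Mean Curvature} to the compact region $f^{-1}([a,t])$ to obtain $\nu(t)-\nu(a)=n\int_{f^{-1}([a,t])}H\,d\mu$, and then apply Lemma \ref{Lemma Key} with $g=nH$ (whose integrability on $f^{-1}([a,b])$ is supplied by the integrability lemma of \S 2) to rewrite the right-hand side as $\int_{a}^{t}\varphi(s)\,ds$, where
\[
\varphi(s):=n\int_{f^{-1}(s)}\frac{H}{\left\vert \nabla f\right\vert }\,d\sigma.
\]
This reduces the corollary to showing $\nu'(t_{0})=\varphi(t_{0})$, which by the fundamental theorem of calculus will follow once $\varphi$ is seen to be continuous at $t_{0}$.

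To verify continuity of $\varphi$ at $t_{0}$, I will use that $f$ is Morse on the compact set $f^{-1}([a,b])$, so its critical values form a finite set; hence there exists $\eta>0$ such that $J:=[t_{0}-\eta,t_{0}+\eta]\cap[a,b]$ contains no critical values. On $J$, the flow $F(p,s)$ of $\nabla f/\left\vert \nabla f\right\vert ^{2}$ emanating from $f^{-1}(t_{0})$ yields a diffeomorphism from $f^{-1}(t_{0})$ onto $f^{-1}(s)$ for every $s\in J$, just as in Case 1 of the proof of Lemma \ref{Lemma Key}. Pulling the integrand $H/\left\vert \nabla f\right\vert $ and the volume form $d\sigma$ back to the fixed compact hypersurface $f^{-1}(t_{0})$ produces an $s$-family of integrands that depends continuously on $s$ and is uniformly bounded, whence bounded convergence delivers $\varphi(s)\to\varphi(t_{0})$ as $s\to t_{0}$.

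The only non-routine step is this continuity claim for $\varphi$; it is essentially the standard statement of continuous dependence of a level-set integral on the parameter $s$, and it should require no machinery beyond the gradient flow already used in Case 1 of the proof of Lemma \ref{Lemma Key}.
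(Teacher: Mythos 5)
Your proposal is correct and follows essentially the same route as the paper: apply Theorem \ref{Theorem Mean Curvature} with variable upper limit, rewrite via Lemma \ref{Lemma Key} as $\int_{a}^{t}\varphi(\tau)\,d\tau$, and differentiate at $t_{0}$ by the fundamental theorem of calculus. The only difference is that you explicitly verify the continuity of $\varphi$ at the regular value $t_{0}$ (via the gradient flow), a hypothesis the paper invokes tacitly when citing the fundamental theorem of calculus; your flow argument for this step is sound.
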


\begin{proof}
For a regular value $t_{0}\in (a,b)$,%
\begin{eqnarray*}
\nu ^{\prime }(t_{0}) &=&\left. \frac{d}{dt}\right\vert
_{t_{0}}\int_{f^{-1}([a,t])}nH\,d\mu \quad \text{(by Theorem \ref{Theorem
Mean Curvature})} \\
&=&\left. \frac{d}{dt}\right\vert _{t_{0}}\int_{a}^{t}\left(
\int_{f^{-1}(\tau )}\frac{nH}{\left\vert \nabla f\right\vert }d\sigma
\right) d\tau \quad \text{(by Lemma \ref{Lemma Key})} \\
&=&\int_{f^{-1}(t_{0})}\frac{nH}{\left\vert \nabla f\right\vert }d\sigma
\quad \text{(by fundamental theorem of calculus).}
\end{eqnarray*}
\end{proof}

We show more applications of Lemma \ref{Lemma Key} with a certain choice of $%
g$.

\begin{theorem}
\label{Theorem Integral of Gradient}Under Hypothesis \dag , $%
\int_{f^{-1}[a,b]}(h\circ f)\cdot \left\vert \nabla f\right\vert d\mu
=\int_{a}^{b}h(t)\nu (t)\,dt$ for any integrable function $h$ on $[a,b]$. In
particular, for any $t_{0}\in \lbrack a,b]$,%
\begin{equation*}
\int_{a}^{t_{0}}\nu (t)\,dt=\int_{f^{-1}([a,t_{0}])}\left\vert \nabla
f\right\vert d\mu \,\text{,}
\end{equation*}%
or equivalently,%
\begin{equation*}
\nu (t_{0})=\left. \frac{d}{dt}\right\vert
_{t_{0}}\int_{f^{-1}([a,t])}\left\vert \nabla f\right\vert d\mu \,\text{.}
\end{equation*}
\end{theorem}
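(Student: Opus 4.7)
The plan is to apply Lemma \ref{Lemma Key} to the function $g := (h\circ f)\cdot |\nabla f|$, establishing the master identity first for continuous $h$ and then extending to integrable $h$ by a measure-theoretic argument; the ``in particular'' assertions then follow by specialization and the fundamental theorem of calculus.

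\textbf{Step 1 (continuous $h$).} If $h$ is continuous on $[a,b]$, then $h\circ f$ is continuous on all of $f^{-1}([a,b])$ (not just on regular points), and so is $|\nabla f|$. Thus $g$ is continuous on the compact set $f^{-1}([a,b])$, hence bounded and integrable. Applying Lemma \ref{Lemma Key} and using that $h\circ f$ restricts to the constant $h(t)$ on $f^{-1}(t)$, one obtains
\[
\int_{f^{-1}([a,b])} (h\circ f)|\nabla f|\, d\mu = \int_a^b h(t) \left(\int_{f^{-1}(t)} d\sigma\right) dt = \int_a^b h(t) \nu(t)\, dt.
\]

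\textbf{Step 2 (extension to integrable $h$).} Define the finite positive Borel measure $\mu_f$ on $[a,b]$ by $\mu_f(A) := \int_{f^{-1}(A)} |\nabla f|\, d\mu$. Step 1 says that $\int h\, d\mu_f = \int h(t)\nu(t)\,dt$ for every continuous $h$, so by Riesz representation the two measures coincide; consequently the master identity holds for every integrable $h$. This is the only subtlety I anticipate, since it requires verifying that the left-hand side depends only on the $L^1$-equivalence class of $h$; the pushforward-measure viewpoint sidesteps the issue cleanly, and a direct monotone-convergence argument for nonnegative $h$ followed by linearity would be an equivalent alternative.

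\textbf{Step 3 (specializations).} Applying the master identity with $b$ replaced by $t_0$ and $h\equiv 1$ yields $\int_a^{t_0} \nu(t)\, dt = \int_{f^{-1}([a,t_0])} |\nabla f|\, d\mu$. Since $\nu$ is continuous on $[a,b]$ (as noted in the introduction), the fundamental theorem of calculus then gives the equivalent differentiation form $\nu(t_0) = \frac{d}{dt}\big|_{t_0} \int_{f^{-1}([a,t])} |\nabla f|\, d\mu$.
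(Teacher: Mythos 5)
Your proof is correct, and its core is the same as the paper's: apply Lemma \ref{Lemma Key} to $g=(h\circ f)\cdot\left\vert \nabla f\right\vert$, specialize to an indicator/constant to get the volume identity, and finish with the fundamental theorem of calculus via continuity of $\nu$. Where you genuinely diverge is Step 2. The paper's proof is one line: it feeds $g=(h\circ f)\cdot\left\vert\nabla f\right\vert$ into Lemma \ref{Lemma Key} for an arbitrary integrable $h$, even though the lemma as stated requires $g$ to be continuous on the regular points of $f^{-1}([a,b])$ --- a hypothesis that a general $L^{1}$ function $h$ does not deliver (indeed $h\circ f$ need not even be obviously measurable for a merely Lebesgue-measurable $h$). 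The paper is implicitly relying on the fact that the \emph{proof} of Lemma \ref{Lemma Key} (flow diffeomorphism plus Fubini) goes through for measurable integrable $g$, so continuity is not really needed. Your route instead treats Lemma \ref{Lemma Key} as a black box with its stated hypotheses: prove the identity for continuous $h$, then identify the pushforward measure $\mu_{f}(A)=\int_{f^{-1}(A)}\left\vert\nabla f\right\vert d\mu$ with $\nu(t)\,dt$ via Riesz uniqueness, which simultaneously handles the extension to integrable $h$ and the dependence-only-on-$L^{1}$-class issue (modifying $h$ on a $dt$-null set $N$ changes the left side only on $f^{-1}(N)\cap\{\left\vert\nabla f\right\vert>0\}$, which is $\mu$-null since $\mu_{f}\ll dt$). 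So your argument is longer but strictly more careful, and it repairs a gap the paper glosses over. One cosmetic point: in Step 3 you re-invoke the theorem with $b$ replaced by $t_{0}$, which strictly requires checking that $t_{0}$ is a value of $f$ per Hypothesis \dag; the paper avoids this by keeping the interval $[a,b]$ and taking $h$ to be the indicator function of $[a,t_{0}]$, which is marginally cleaner.
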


\begin{proof}
The first assertion follows from Lemma \ref{Lemma Key} by letting $g=(h\circ
f)\cdot \left\vert \nabla f\right\vert $. The second assertion results from
letting $h$ be the indicator function for $[a,t_{0}]$. Continuity of $\nu $
makes applicable the fundamental theorem of calculus, yielding the last
assertion.
\end{proof}

\begin{proposition}
Assume Hypothesis \dag .

\begin{description}
\item[(a)] $\int_{f^{-1}([a,b])}K\partial _{i}f\,d\mu =0$ for $i\in
\{1,\cdots ,n+1\}$.

\item[(b)] If, in addition, $n$ is even and $[a,b]$ is free of critical
values, then%
\begin{equation*}
\int_{f^{-1}([a,b])}K\left\vert \nabla f\right\vert d\mu =\frac{1}{2}%
(b-a)\chi (f^{-1}(a))\nu (S^{n})\text{\thinspace ,}
\end{equation*}%
where $\nu (S^{n})$ is the ($n$-dimensional) volume of the unit sphere $%
S^{n} $ and $\chi (f^{-1}(a))$ is the Euler characteristic of $f^{-1}(a)$.
\end{description}
\end{proposition}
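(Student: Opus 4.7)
The strategy for both parts is to apply Lemma~\ref{Lemma Key} and interpret the inner level-set integral via the Gauss map $G:f^{-1}(t)\to S^{n}$, $G(p)=\mathbf{N}(p)$. For a regular value $t\in[a,b]$, $f^{-1}(t)$ is a closed oriented $n$-hypersurface in $\mathbb{R}^{n+1}$ (since $f^{-1}([a,b])$ is compact). The definition $K=\det dG$, together with the observation that $T_{p}f^{-1}(t)$ and $T_{G(p)}S^{n}$ coincide as oriented inner-product subspaces of $\mathbf{N}(p)^{\perp}\subset\mathbb{R}^{n+1}$, yields the pointwise pullback identity $K\,d\sigma=G^{*}d\sigma_{S^{n}}$. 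Combined with the standard degree formula $\int_{f^{-1}(t)}G^{*}\omega=\deg(G)\int_{S^{n}}\omega$, this is the engine of both computations.

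For part (a), note that $\partial_{i}f/|\nabla f|=-N_{i}=-x_{i}\circ G$. Integrability of $K\partial_{i}f$ (Lemma~2) permits the use of Lemma~\ref{Lemma Key}, giving
\begin{equation*}
\int_{f^{-1}([a,b])}K\partial_{i}f\,d\mu=-\int_{a}^{b}\int_{f^{-1}(t)}(x_{i}\circ G)\,G^{*}d\sigma_{S^{n}}\,dt=-\int_{a}^{b}\deg(G)\int_{S^{n}}x_{i}\,d\sigma_{S^{n}}\,dt.
\end{equation*}
The sphere integral vanishes by antipodal symmetry, and any critical values in $[a,b]$ are absorbed exactly as in Lemma~\ref{Lemma Key}, since the inner integrand is zero at every regular $t$.

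For part (b), Lemma~\ref{Lemma Key} with $g=K|\nabla f|$ reduces the left side to $\int_{a}^{b}\int_{f^{-1}(t)}K\,d\sigma\,dt$. Because $[a,b]$ is free of critical values, the gradient flow from Case~1 of the proof of Lemma~\ref{Lemma Key} provides a diffeomorphism $f^{-1}(a)\times[a,b]\to f^{-1}([a,b])$, so $\chi(f^{-1}(t))=\chi(f^{-1}(a))$ for all $t\in[a,b]$. The classical Hopf theorem for closed orientable even-dimensional hypersurfaces in Euclidean space gives $2\deg(G)=\chi(f^{-1}(t))$ when $n$ is even, whence
\begin{equation*}
\int_{f^{-1}(t)}K\,d\sigma=\deg(G)\,\nu(S^{n})=\tfrac{1}{2}\chi(f^{-1}(a))\,\nu(S^{n}).
\end{equation*}
Integrating over $t$ produces the factor $b-a$ and yields the stated identity.

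The main obstacle is the clean invocation of Hopf's theorem relating the Gauss-map degree to the Euler characteristic in the even-dimensional case; this is the one piece of nontrivial topology entering the argument. A secondary point requiring care is the paper's orientation convention $\mathbf{N}=-\nabla f/|\nabla f|$, which reverses $K$ by a factor of $(-1)^{n}$ relative to the more common sign choice; this is harmless for $n$ even (part (b)) and irrelevant for the vanishing statement (part (a)).
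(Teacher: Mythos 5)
Your proposal is correct and takes essentially the same route as the paper: your part (a) is the paper's computation with the vector-valued $g=K\nabla f$ carried out component-wise (the paper's form $\omega=\mathrm{Id}_{S^{n}}\,d\sigma_{S^{n}}$ is exactly your family $x_{i}\,d\sigma_{S^{n}}$), with the identical pullback identity, degree formula, and antipodal cancellation. In part (b) you likewise apply Lemma~\ref{Lemma Key} with $g=K\left\vert \nabla f\right\vert$, and your invocation of Hopf's theorem $\deg G=\frac{1}{2}\chi(f^{-1}(t))$ together with $\int_{f^{-1}(t)}K\,d\sigma=\deg G\cdot \nu (S^{n})$ is precisely the hypersurface Gauss--Bonnet statement the paper quotes directly.
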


\begin{proof}
For Part (a), let $g$ in Lemma \ref{Lemma Key} be the vector-valued function 
$K\nabla f$. Then,%
\begin{equation*}
\int_{f^{-1}([a,b])}K\nabla f\,d\mu =\int_{a}^{b}\left( \int_{f^{-1}(t)}K%
\frac{\nabla f}{\left\vert \nabla f\right\vert }d\sigma \right) dt\,\text{.}
\end{equation*}%
Now, note that%
\begin{equation*}
\int_{f^{-1}(t)}K\frac{\nabla f}{\left\vert \nabla f\right\vert }d\sigma
=-\int_{f^{-1}(t)}K\mathbf{N\,}d\sigma =\mathbf{0}\text{\thinspace .}
\end{equation*}%
For detail of the last equality, let $M$ denote $f^{-1}(t)$ and define the
vector-valued $n$-form $\omega $ on $S^{n}$ by letting $\omega =\limfunc{Id}%
_{S^{n}}d\sigma _{S^{n}}$, where $d\sigma _{S^{n}}$ is the volume form on $%
S^{n}$. Then, with $G$ being the Gauss map $p\mapsto \mathbf{N}(p)$, $%
G^{\ast }\omega =K\mathbf{N}\,d\sigma $ as can be verified pointwise. Hence,%
\begin{equation*}
\int_{M}K\mathbf{N}\,d\sigma =\int_{M}G^{\ast }\omega =\deg G\cdot
\int_{S^{n}}\omega \,\text{.}
\end{equation*}%
But%
\begin{equation*}
\int_{S^{n}}\omega =\int_{S^{n}}\limfunc{Id}\nolimits_{S^{n}}d\sigma
_{S^{n}}=\mathbf{0}
\end{equation*}%
due to cancellation of antipodal contributions.

Under the hypothesis of Part (b), $f^{-1}(t)$ is diffeomorphic to $f^{-1}(a)$
for $t\in \lbrack a,b]$. By Gauss-Bonnet theorem, 
\begin{equation*}
\int_{f^{-1}(t)}K\,d\sigma =\frac{1}{2}\chi (f^{-1}(t))\nu (S^{n})=\frac{1}{2%
}\chi (f^{-1}(a))\nu (S^{n})
\end{equation*}%
Letting $g=K\left\vert \nabla f\right\vert $ in Lemma \ref{Lemma Key} then
proves Part (b).
\end{proof}

\bigskip 

\noindent \textit{\small Mathematics Department, Illinois State University,
Normal, Illinois}

\noindent \texttt{pding@ilstu.edu}

\end{document}